\def\Ab{{\boldsymbol A}}
\def\Bb{{\boldsymbol B}}
\def\Db{{\boldsymbol D}}
\def\Qb{{\boldsymbol Q}}
\def\Ub{{\boldsymbol U}}
\def\Vb{{\boldsymbol V}}
\def\Wb{{\boldsymbol W}}
\def\Yb{{\boldsymbol Y}}
\def\Zb{{\boldsymbol Z}}
\def\vb{{\boldsymbol v}}
\def\xb{{\boldsymbol x}}
\def\zb{{\boldsymbol z}}
\def\Lambdab{{\boldsymbol     \Lambda}}
\def\Sigmab{{\boldsymbol      \Sigma}}
\def\Omegab{{\boldsymbol      \Omega}}
\newtheorem{thm}{Theorem}[section]
\newtheorem{cor}[thm]{Corollary}
\def\intercal{\top}
\newcommand{\LeftEqNo}{\let\veqno\@@leqno}
\journal{Statistics and Probability Letter}
\begin{document}

\begin{frontmatter}

%% Title, authors and addresses

%% use the tnoteref command within \title for footnotes;
%% use the tnotetext command for the associated footnote;
%% use the fnref command within \author or \address for footnotes;
%% use the fntext command for the associated footnote;
%% use the corref command within \author for corresponding author footnotes;
%% use the cortext command for the associated footnote;
%% use the ead command for the email address,
%% and the form \ead[url] for the home page:
%%
%% \title{Title\tnoteref{label1}}
%% \tnotetext[label1]{}
%% \author{Name\corref{cor1}\fnref{label2}}
%% \ead{email address}
%% \ead[url]{home page}
%% \fntext[label2]{}
%% \cortext[cor1]{}
%% \address{Address\fnref{label3}}
%% \fntext[label3]{}

\title{A Consistency Theorem for Randomized Singular Value Decomposition}

%% use optional labels to link authors explicitly to addresses:
%% \author[label1,label2]{<author name>}
%% \address[label1]{<address>}
%% \address[label2]{<address>}

\author[Chen]{Ting-Li Chen\corref{cor1}}%\fnref{fn1}}
\ead{tlchen@stat.sinica.edu.tw}

\author[Chen]{Su-Yun Huang}%\fnref{fn1}}
\ead{syhuang@stat.sinica.edu.tw}

\author[Wang]{Weichung Wang}%\fnref{fn2}}
\ead{wwang@ntu.edu.tw}
%\ead{+886 37550616}

%% \ead[url]{home page}
%%\fntext[label2]{}
\cortext[cor1]{Corresponding author}
\address[Chen]{Institute of Statistical Science,
Academia Sinica, Taipei 11529, Taiwan}

\address[Wang]{Institute of Applied Mathematical Sciences,
National Taiwan University, Taipei 10617, Taiwan}

\begin{abstract}
The singular value decomposition (SVD) and the principal component analysis are fundamental tools and probably the most popular methods for data dimension reduction. The rapid growth in the size of data matrices has lead to a need
for developing efficient large-scale SVD algorithms. Randomized SVD was proposed, and its potential was demonstrated for computing a low-rank SVD \citep{rokhlin2009randomized}. In this article, we provide a consistency theorem for the randomized SVD algorithm and a numerical example to show how the random projections to low dimension affect the consistency.
\end{abstract}

\begin{keyword}
Consistency \sep randomized algorithm \sep  singular value decomposition \sep principal component analysis
\end{keyword}

\end{frontmatter}

\section{Introduction}

The singular value decomposition (SVD) and the principal component analysis are fundamental and probably the most popular data analytic methods for dimension reduction. Consider the rank-$k$ SVD of a given ${m\times n}$ real matrix
\begin{equation}
\Ab =\Ub\Sigmab\Vb^\intercal
 \approx {\Ub}_{k}\, {\Sigmab}_{k}\,{\Vb}_{k}^\intercal,
\label{eq:rank_k_svd}
\end{equation}
where $\Ub\Sigmab\Vb^\intercal$ is the full SVD and ${\Ub}_{k}\,
{\Sigmab}_{k}\,{\Vb}_{k}^\intercal$ is the truncated rank-$k$ SVD.
The columns of ${\Ub}_{k}$ and ${\Vb}_{k}$ are the $k$ leading left
and right singular vectors of $\Ab$, respectively. The diagonal
entries of ${\Sigmab}_{k}$ are the $k$ largest singular values of
$\Ab$. The computational complexity of the rank-$k$ SVD of $\Ab$ is $O(kmn)$.
When $m$ and $n$ are both large, the computational load is quite heavy.
In recent years, randomized algorithms have been emerging from the community of scientific computing to get fast solutions for big data and big matrix analysis \citep{rokhlin2009randomized,mahoney2011randomized,woodruff2014sketching,slide:RandNLA}. In \cite{rokhlin2009randomized}, a randomized algorithm for rank-$k$ SVD and rank-$k$ PCA for a large matrix was proposed. This randomized SVD (rSVD) algorithm has used a random mapping to bring $\Ab$ to a reduced matrix in low dimension. Next, the SVD of this reduced matrix in low dimension is performed and then mapped back to the
original space to obtain an approximate truncated SVD of $\Ab$. The rSVD algorithm is summarized below.
\begin{algorithm}
  \caption{rSVD \citep{rokhlin2009randomized}}
  \label{alg:rsvd}
  \begin{algorithmic}[1]
    \REQUIRE $\Ab$ (real $m \times n$ matrix, without loss of generality assume $m\le  n$), $k$ (desired rank of truncated SVD), $p$ (parameter for oversampling dimension),
    $\ell=k+p<m$ (dimension of the approximate column space),
    $q$ (exponent of the power method)
    \ENSURE Approximate rank-$k$ SVD of $\Ab \approx \widehat{\Ub}_k\,
        \widehat{\Sigmab}_k \, \widehat{\Vb}_k^\intercal$
    \STATE Generate an $n \times \ell$ random matrix $\Omegab$.
    \STATE Assign $\Yb \leftarrow (\Ab\Ab^\intercal)^{q}\Ab\Omegab$.
    \STATE Compute $\Qb$ whose columns are an orthonormal basis of $\Yb$.
    \STATE Compute the SVD of $\Qb^\intercal\Ab = \widehat{\Wb}_{\ell}\,
           \widehat{\Sigmab}_{\ell}\, \widehat{\Vb}_{\ell}^\intercal$.
    \STATE Assign $\widehat{\Ub}_{\ell} \leftarrow \Qb\widehat{\Wb}_{\ell}$.
    \STATE Extract the leading $k$ singular vectors and singular values from $\widehat{\Ub}_{\ell}$, $\widehat\Sigmab_\ell$
    and $\widehat{\Vb}_{\ell}$ to obtain $\widehat{\Ub}_k$, $\widehat\Sigmab_k$ and $\widehat{\Vb}_k$.
  \end{algorithmic}
  \end{algorithm}

In this article, we provide the rSVD algorithm a statistical justification. We show that
$\Qb$, which is the computed orthonormal basis of the reduced matrix $\Yb$, contains consistent information of the left singular vectors $\Ub$ in the sense of Theorem~\ref{key}.

\section{Main theorem}
We state and prove the consistency theorem in this section. This theorem suggests that $\Qb$ contains consistent information for the left singular vectors $\Ub$ and for the corresponding decreasing order in terms of the singular values.
\begin{thm}\label{key}
For a given matrix $\Ab$ with strictly decreasing singular values, let $\Qb$ be the orthonormal basis of the random
subspace computed by Algorithm~\ref{alg:rsvd} with $\Omegab$
having i.i.d. entries from the standard Gaussian distribution. Then, we have
\begin{equation}
E (\Qb\Qb^\intercal ) = \Ub\Lambdab\Ub^\intercal,
\label{eq:EQQT}
\end{equation}
where (a) $\Lambdab$ is a diagonal matrix,
(b)~each of the diagonal entries is in the interval $(0,1)$, and
(c)~these diagonal entries are strictly decreasing.
\end{thm}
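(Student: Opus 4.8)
The plan is to diagonalize the problem in the basis of left singular vectors of $\Ab$. Write $\Ab = \Ub\Sigmab\Vb^\intercal$ and set $\Bb = (\Ab\Ab^\intercal)^q\Ab = \Ub\Sigmab^{2q+1}\Vb^\intercal$, so that $\Yb = \Bb\Omegab$. Since $\Omegab$ has i.i.d. standard Gaussian entries, its distribution is orthogonally invariant; writing $\Gb = \Vb^\intercal\Omegab$, the matrix $\Gb$ is again an $n\times\ell$ Gaussian matrix with i.i.d. standard normal entries, and $\Yb = \Ub\Sigmab^{2q+1}\Gb = \Ub\Db\Gb$ where $\Db = \diag(\sigma_1^{2q+1},\dots,\sigma_m^{2q+1})$ has strictly decreasing positive diagonal (using the strictly decreasing singular values hypothesis and $\ell<m$). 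The key reduction is that $\Qb\Qb^\intercal$ is the orthogonal projector onto the column space of $\Yb$; since $\Ub$ is orthogonal, this projector equals $\Ub\, \Pb\,\Ub^\intercal$ where $\Pb$ is the orthogonal projector onto the column space of $\Db\Gb$ in $\Real^m$. Hence $E(\Qb\Qb^\intercal) = \Ub\, E(\Pb)\,\Ub^\intercal$, and it remains to show that $\Lambdab := E(\Pb)$ is diagonal with strictly decreasing diagonal entries in $(0,1)$.

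For claim (a), I would exploit the sign symmetry of the Gaussian: let $\Sbf = \diag(\pm 1,\dots,\pm1)$ be any diagonal sign matrix. Then $\Sbf\Gb \stackrel{d}{=}\Gb$, and the column space of $\Db(\Sbf\Gb) = \Sbf(\Db\Gb)$ (since $\Db$ and $\Sbf$ commute) is $\Sbf$ applied to the column space of $\Db\Gb$, so $\Pb \stackrel{d}{=} \Sbf\Pb\Sbf$. Taking expectations, $E(\Pb) = \Sbf\,E(\Pb)\,\Sbf$ for every sign pattern $\Sbf$, which forces all off-diagonal entries of $E(\Pb)$ to vanish. For the diagonal entries $\lambda_i = E(\Pb)_{ii} = E(\ebf_i^\intercal\Pb\,\ebf_i) = E(\norm{\Pb\,\ebf_i}^2)$: since $\Pb$ is a projector of rank $\ell$, $0\le \ebf_i^\intercal\Pb\,\ebf_i\le 1$, giving $\lambda_i\in[0,1]$; ruling out the endpoints $0$ and $1$ requires showing $0 < \ebf_i^\intercal\Pb\,\ebf_i < 1$ with positive probability, which follows because with probability one $\Db\Gb$ has rank $\ell$ and its column space is a ``generic'' $\ell$-dimensional subspace — neither containing $\ebf_i$ nor orthogonal to it — so $\lambda_i\in(0,1)$.

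The main obstacle is claim (c), the strict monotonicity $\lambda_1 > \lambda_2 > \dots > \lambda_m$. Here the scaling by $\Db$ is essential: if $\Db$ were a multiple of the identity, all $\lambda_i$ would be equal by permutation symmetry, so the argument must genuinely use $\sigma_1 > \sigma_2 > \dots$. I would compare $\lambda_i$ and $\lambda_{i+1}$ by a coupling that swaps coordinates $i$ and $i+1$. Let $\Tbf$ be the transposition permutation matrix exchanging $\ebf_i\leftrightarrow\ebf_{i+1}$; then $\Tbf\Gb\stackrel{d}{=}\Gb$, and the column space of $\Db\Tbf\Gb$ equals that of $(\Db\Tbf\Db^{-1})(\Db\Gb) = \Tbf(\Tbf\Db\Tbf\Db^{-1})(\Db\Gb)$, i.e.\ the image under $\Tbf$ of the column space of $\Mbf_c(\Db\Gb)$, where $\Mbf_c$ is the diagonal matrix that scales coordinate $i$ by $c=\sigma_{i+1}^{2q+1}/\sigma_i^{2q+1}<1$ and coordinate $i+1$ by $c^{-1}$. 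Writing $\Pb_c$ for the projector onto the range of $\Mbf_c(\Db\Gb)$ and tracking the $(i,i)$ entry, one gets $\lambda_i = E\big((\Tbf\Pb_c\Tbf)_{ii}\big) = E\big((\Pb_c)_{i+1,i+1}\big)$, whereas $\lambda_{i+1} = E\big((\Pb_{c})_{i+1,i+1}\big)$ computed with $c$ replaced by... — more precisely, one shows $\lambda_i - \lambda_{i+1} = E\big[f(c) - f(1)\big]$ for a function $f$ that is strictly decreasing in the relevant scaling parameter, using an explicit formula for the projector's diagonal entry in terms of the Gaussian columns (a ratio of determinants, via Cramer's rule applied to $\Db\Gb(\Db\Gb)^\intercal$ restricted appropriately). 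Establishing that this $f$ is \emph{strictly} monotone — equivalently, that inflating the weight on coordinate $i$ relative to coordinate $i+1$ strictly increases the expected squared length of the projection onto $\ebf_i$ — is the crux; I expect it to follow from a direct differentiation of the determinantal ratio in $c$ together with a Cauchy–Schwarz / correlation inequality showing the derivative has a definite sign, with strictness coming from the almost-sure non-degeneracy of the Gaussian columns. Once this strict inequality is in hand for each adjacent pair, claims (a)–(c) together give \eqref{eq:EQQT}.
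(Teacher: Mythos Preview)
Your setup and parts (a)--(b) are essentially the paper's argument. The reduction $E(\Qb\Qb^\intercal)=\Ub\,E(\Pb)\,\Ub^\intercal$ with $\Pb$ the projector onto the range of $\Db\Gb$ is exactly how the paper starts (they write $\Pb$ explicitly as $\Db\Gb(\Gb^\intercal\Db^2\Gb)^{-1}\Gb^\intercal\Db$ with $\Gb=\Vb^\intercal\Omegab$ restricted to its first $m$ rows). For (a) the paper uses the same sign-flip symmetry you use, just phrased entrywise. For (b) your projector argument is if anything cleaner than the paper's, which instead derives the explicit identity
\[
\Pb_{jj}=\sigma_j^2\,\zb_j^\intercal\Big(\sum_{l=1}^m\sigma_l^2\zb_l\zb_l^\intercal\Big)^{-1}\zb_j
=1-\frac{1}{1+\sigma_j^2\,\zb_j^\intercal\Bb_{(-j)}^{-1}\zb_j},\qquad \Bb_{(-j)}=\sum_{l\ne j}\sigma_l^2\zb_l\zb_l^\intercal,
\]
via Sherman--Morrison, with $\zb_l$ the columns of $\Gb^\intercal$.

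The gap is in (c). You correctly identify the transposition coupling $\Tbf\Gb\stackrel{d}{=}\Gb$ --- this is precisely what the paper uses --- but your reduction to ``$f(c)$ versus $f(1)$'' is left unfinished (the sentence literally trails off), and the proposed route through differentiating a determinantal ratio plus a Cauchy--Schwarz/correlation inequality is not carried out and is not obviously workable. The paper closes this gap with a second application of Sherman--Morrison, and this is the missing idea in your proposal. In their notation, after swapping $\zb_1\leftrightarrow\zb_2$ one has $\widetilde\Bb_{(-2)}=\Bb_{(-1)}+(\sigma_1^2-\sigma_2^2)\zb_2\zb_2^\intercal$, and Sherman--Morrison gives
\[
\zb_1^\intercal\widetilde\Bb_{(-2)}^{-1}\zb_1
=\zb_1^\intercal\Bb_{(-1)}^{-1}\zb_1
-\frac{(\sigma_1^2-\sigma_2^2)\bigl(\zb_1^\intercal\Bb_{(-1)}^{-1}\zb_2\bigr)^2}{1+\zb_2^\intercal\Bb_{(-1)}^{-1}\zb_2}
\le \zb_1^\intercal\Bb_{(-1)}^{-1}\zb_1,
\]
with strict inequality almost surely. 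Combined with $\sigma_2<\sigma_1$ and the monotone transform $t\mapsto 1-1/(1+t)$ from the displayed identity above, this yields a \emph{pointwise} a.s.\ inequality between the two diagonal entries in the coupled pair; averaging over the coupling gives $\lambda_1>\lambda_2$. So rather than differentiating in your scaling parameter $c$, the clean step is the rank-one update formula for $\Bb_{(-j)}^{-1}$, which makes the comparison elementary.
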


\begin{proof}
Without loss of generality, we assume the exponent in Step 2 of the rSVD algorithm  $q=0$. If not, re-assign $(\Ab\Ab^\top)^q\Ab$ to $\Ab$ and proceed with the following proof. Note that $\Sigmab$ is a diagonal matrix of size $m\times n$ with $m\le n$,
\[
\Sigmab =\left[\begin{array}{ccccccc}
\sigma_1&0&\cdots& \cdots &\cdots& \cdots &0\\
0& \sigma_2 &0&\cdots & \cdots& \cdots &0 \\
\vdots & \ddots &\ddots & \ddots& \cdots & \cdots &0\\
0& \cdots & \cdots  & \sigma_m &0&\cdots&0\\
\end{array} \right].
\]
We have the expectation
\begin{eqnarray*}
&& E\left(\Qb\Qb^\intercal\right)
   = E\left(\Ab\Omegab \left(\Omegab^\intercal\Ab^\intercal \Ab\Omegab\right)^{-1}
 \Omegab^\intercal\Ab^\intercal\right)\nonumber\\
&=& \Ub~ E\left(\Sigmab\Vb^\intercal\Omegab
  \left(\Omegab^\intercal\Vb\Sigmab^\top\Sigmab\Vb^\intercal\Omegab\right)^{-1}
  \Omegab^\intercal\Vb\Sigmab^\top\right)~ \Ub^\intercal
  =\Ub\Lambdab\Ub^\intercal,
\end{eqnarray*}
where
\begin{equation}\label{eq:Lambda}
\Lambdab=E\left(\Sigmab\Vb^\intercal\Omegab
  \left(\Omegab^\intercal\Vb\Sigmab^\top\Sigmab\Vb^\intercal\Omegab\right)^{-1}
  \Omegab^\intercal\Vb\Sigmab^\top\right).
\end{equation}
Note that $\Omegab^\intercal\Vb\Sigmab^\top\Sigmab
\Vb^\intercal\Omegab$ is non-singular with probability one, as $\Omegab$ consists of i.i.d. Gaussian entries.

(a)~{\it First, we show that $\Lambdab$ is a diagonal matrix.} Let $\Zb_{\ell\times n} =\left[\zb_1,\dots,\zb_n \right] =
\Omegab^\intercal\Vb$. That is, $\zb_j
=\Omegab^\intercal \vb_j$, where $\vb_j$ is the $j$th column
of~$\Vb$. The
$(j,j')$th entry of $\Lambdab$ is given by
\[E\left(\sigma_j\sigma_{j'}\zb_j^\intercal
 \Big(\sum_{l=1}^m \sigma_l^2 \zb_l\zb_l^\intercal\Big)^{-1}
 \zb_{j'}\right),\]
where $\sigma_k$ is the $k$th diagonal element of $\Sigmab$.
Below we show that all off-diagonal entries of
$\Lambdab$ are zero. Without loss of generality, consider the
$(1,j)$th entry of $\Lambdab$. Let
$\Vb_{-1}=\left[-\vb_1,\vb_2,\ldots,\vb_n\right]$ and consider
\[\widetilde{\Omegab} = \Vb \Vb_{-1}^\intercal \Omegab.\]
Then, $\widetilde{\Omegab}^{\intercal}\Vb=\Omegab^\intercal\Vb_{-1}
\Vb^\intercal \Vb = \Omegab^\intercal\Vb_{-1}$. Let
$\left[\widetilde\zb_1,\dots,\widetilde\zb_n \right] =
\widetilde{\Omegab}^{\intercal}\Vb$. Then,
$\widetilde\zb_1=\widetilde{\Omegab}^{\intercal}\vb_1=-\zb_1$ and
$\widetilde\zb_j=\widetilde{\Omegab}^{\intercal}\vb_j=\zb_j,
~\forall j\neq 1$. Since $\Omegab$
consists of i.i.d. Gaussian entries and $\Vb
\Vb_{-1}^\intercal$ is an $n\times n$ orthogonal matrix,
$\Omegab$ and $\widetilde{\Omegab}$ have the same distribution. It implies
that $\left[\widetilde\zb_1,\dots,\widetilde\zb_n \right]$ and
$\left[\zb_1,\dots,\zb_n \right]$ follow the same distribution. Then,
\[
\zb_1^\intercal \Big(\sum_{l=1}^m \sigma_l^2 \zb_l\zb_l^\intercal \Big)^{-1}\zb_j
\stackrel{d}{=}\widetilde\zb_1^\intercal \Big(\sum_{l=1}^m \sigma_l^2
  \widetilde\zb_l\widetilde\zb_l^\intercal\Big)^{-1}\widetilde\zb_j
= -\zb_1^\intercal
 \Big(\sum_{l=1}^m \sigma_l^2 \zb_l\zb_l^\intercal \Big)^{-1}\zb_j,
\]
where ``$\stackrel{d}=$'' indicates ``equal in distribution''. Therefore, for
the $(1,j)$th entry of $\Lambdab$, we have
\[
E\left\{\zb_1^\intercal \Big(\sum_{l=1}^m \sigma_l^2
  \zb_l\zb_l^\intercal \Big)^{-1}\zb_{j}\right\}
= -E\left\{\zb_1^\intercal \Big(\sum_{l=1}^m \sigma_l^2
  \zb_l\zb_l^\intercal \Big)^{-1}\zb_{j}\right\}= 0.
\]

(b) {\it Next, we show that all the diagonals, $E\left(\sigma_j^2
\zb_j^\intercal \Big(\sum_{l=1}^m \sigma_l^2
\zb_l\zb_l^\intercal\Big)^{-1} \zb_{j}\right)$, $j=1,\dots,m$, are
less than one.} Let $\Bb_{(-j)} = \sum_{l\neq j}^m \sigma_l^2
\zb_l\zb_l^\intercal$. As $\Omegab$ consists of i.i.d. Gaussian
entries and $\ell<m$, $\Bb_{(-j)}$ is strictly positive definite
with probability one. By Sherman-Morrison-Woodbury matrix identity,
we have
\[
\Big(\sum_{l=1}^m \sigma_l^2 \zb_l\zb_l^\intercal\Big)^{-1}
=
  \Bb_{(-j)}^{-1} - \frac {\sigma_j^2  \Bb_{(-j)}^{-1} \zb_j \zb_j^\intercal
  \Bb_{(-j)}^{-1}}{1+\sigma_j^2 \zb_j^\intercal \Bb_{(-j)}^{-1} \zb_j}.
  \]
Then,
\begin{eqnarray}
&&\sigma_j^2 \zb_j^\intercal \Big(\sum_{l=1}^m \sigma_l^2 \zb_l\zb_l^\intercal\Big)^{-1} \zb_j
  = \sigma_j^2 \zb_j^\intercal \left(\Bb_{(-j)}^{-1} - \frac {\sigma_j^2  \Bb_{(-j)}^{-1} \zb_j \zb_j^\intercal
    \Bb_{(-j)}^{-1}} {1+\sigma_j^2 \zb_j^\intercal \Bb_{(-j)}^{-1} \zb_j}\right) \zb_j\nonumber\\
&=& \sigma_j^2 \left(\zb_j^\intercal \Bb_{(-j)}^{-1} \zb_j -\frac {\sigma_j^2 \zb_j^\intercal\Bb_{(-j)}^{-1}
   \zb_j \zb_j^\intercal \Bb_{(-j)}^{-1} \zb_j} {1+\sigma_j^2 \zb_j^\intercal \Bb_{(-j)}^{-1} \zb_j}\right)\nonumber\\
   &=&\frac {\sigma_j^2 \zb_j^\intercal \Bb_{(-j)}^{-1} \zb_j} {1+\sigma_j^2 \zb_j^\intercal \Bb_{(-j)}^{-1} \zb_j}
    =1-\frac 1 {1+\sigma_j^2 \zb_j^\intercal \Bb_{(-j)}^{-1} \zb_j} <1.
\label{eq:lt_one}
\end{eqnarray}
By taking expectation, we have $E\big\{\sigma_j^2 \zb_j^\intercal (\sum_{l=1}^m \sigma_l^2 \zb_l\zb_l^\intercal)^{-1} \zb_j\big\} <1$.

(c)~{\it Finally, we want to show that $E\big\{\sigma_j^2
\zb_j^\intercal \big(\sum_{l=1}^m \sigma_l^2
\zb_l\zb_l^\intercal\big)^{-1} \zb_{j}\big\}$ is strictly decreasing
as $j$ increases.} Without loss of generality, we will only show the
comparison for $j=1,2$, i.e., we would like to establish the following inequality.
\[E\left\{\sigma_1^2 \zb_1^\intercal
\big(\sum_{l=1}^m \sigma_l^2 \zb_l\zb_l^\intercal\big)^{-1}
\zb_{1}\right\} > E\left\{\sigma_2^2 \zb_2^\intercal \big(\sum_{l=1}^m
\sigma_l^2 \zb_l\zb_l^\intercal\big)^{-1} \zb_{2}\right\}.\]
Consider
$\widetilde{\Omegab}=\Vb\Vb_{1,2}^\intercal \Omegab$, where
$\Vb_{1,2}=\left[\vb_2,\vb_1,\vb_3,\ldots,\vb_n\right]$. Note that $\widetilde\Omegab$ and $\Omegab$ have the same distribution.
Let $\widetilde{\Omegab}^\intercal \Vb = \left[\xb_1,\xb_2,\dots,\xb_n \right]$. Then,  $\xb_1=\zb_2$,
$\xb_2=\zb_1$, and $\xb_j=\zb_j$ for all $3 \leq j \leq n$. Similar
to (\ref{eq:lt_one}), for $j=1,\dots,m$,
\begin{equation}\label{eq:lt_two}
\sigma_j^2 \xb_j^\intercal \Big(\sum_{l=1}^m \sigma_l^2 \xb_l\xb_l^\intercal\Big)^{-1} \xb_j
 = 1-\frac 1 {1+\sigma_j^2 \xb_j^\intercal \widetilde{\Bb}_{(-j)}^{-1} \xb_j},
\end{equation}
where $\widetilde{\Bb}_{(-j)} = \sum_{l\neq j}^m
\sigma_l^2\xb_l\xb_l^\intercal$. Again, we only need to consider the
case that $\widetilde{\Bb}_{(-j)}$ is of full rank, which holds with
probability one. Observe that
$\widetilde{\Bb}_{(-2)}=\Bb_{(-1)}+(\sigma_1^2-\sigma_2^2) \zb_2
\zb_2^\intercal$. Then,
\begin{eqnarray*}
&&\xb_2^\intercal \widetilde{\Bb}_{(-2)}^{-1} \xb_2 = \zb_1^\intercal \left(\Bb_{(-1)}
 +(\sigma_1^2-\sigma_2^2) \zb_2 \zb_2^\intercal\right)^{-1} \zb_1 \\
&=& \zb_1^\intercal \Bb_{(-1)}^{-1} \zb_1  -\frac { (\sigma_1^2-\sigma_2^2)\zb_1^\intercal\Bb_{(-1)}^{-1}
   \zb_2 \zb_2^\intercal \Bb_{(-1)}^{-1} \zb_1} {1+ \zb_{2}^\intercal \Bb_{(-1)}^{-1} \zb_{2}}
  \leq \zb_1^\intercal \Bb_{(-1)}^{-1} \zb_1.
\end{eqnarray*}
The equality holds only when $\zb_1^\intercal\Bb_{(-1)}^{-1}
\zb_2=0$, which happens with zero probability.
Since $\sigma_1>
\sigma_2>0$, we have $\sigma_2^2 \xb_2^\intercal
\widetilde{\Bb}_{(-2)}^{-1} \xb_2<\sigma_1^2 \zb_1^\intercal
\Bb_{(-1)}^{-1} \zb_1$. Along with~(\ref{eq:lt_two}), we have
\[
\sigma_2^2 \xb_2^\intercal \Big(\sum_{l=1}^m \sigma_l^2 \xb_l\xb_l^\intercal\Big)^{-1} \xb_2 < \sigma_1^2 \zb_1^\intercal
 \Big(\sum_{l=1}^m \sigma_l^2 \zb_l\zb_l^\intercal\Big)^{-1} \zb_1.
\]
Similarly, we have $\sigma_1^2 \xb_1^\intercal \Big(\sum_{l=1}^m
\sigma_l^2 \xb_l\xb_l^\intercal\Big)^{-1} \xb_1
  > \sigma_2^2 \zb_2^\intercal \Big(\sum_{l=1}^m \sigma_l^2 \zb_l\zb_l^\intercal\Big)^{-1} \zb_2$.
Then,
\begin{eqnarray*}
&&\sigma_1^2 \zb_1^\intercal \Big(\sum_{l=1}^m \sigma_l^2 \zb_l\zb_l^\intercal\Big)^{-1} \zb_1
  +\sigma_1^2 \xb_1^\intercal \Big(\sum_{l=1}^m \sigma_l^2 \xb_l\xb_l^\intercal\Big)^{-1} \xb_1  \\
& > & \sigma_2^2 \xb_2^\intercal \Big(\sum_{l=1}^m \sigma_l^2 \xb_l\xb_l^\intercal\Big)^{-1} \xb_2
  +\sigma_2^2 \zb_2^\intercal \Big(\sum_{l=1}^m \sigma_l^2 \zb_l\zb_l^\intercal\Big)^{-1} \zb_2.
\end{eqnarray*}
Take the expectation, and we have
\begin{eqnarray}
&&E\left(\sigma_1^2 \zb_1^\intercal \Big(\sum_{l=1}^m \sigma_l^2 \zb_l\zb_l^\intercal\Big)^{-1} \zb_1\right)
  +E\left(\sigma_1^2 \xb_1^\intercal \Big(\sum_{l=1}^m \sigma_l^2 \xb_l\xb_l^\intercal\Big)^{-1} \xb_1 \right)\nonumber \\
&>& E\left(\sigma_2^2 \xb_2^\intercal \Big(\sum_{l=1}^m \sigma_l^2 \xb_l\xb_l^\intercal\Big)^{-1} \xb_2\right)
  +E\left(\sigma_2^2 \zb_2^\intercal \Big(\sum_{l=1}^m \sigma_l^2 \zb_l\zb_l^\intercal\Big)^{-1} \zb_2\right). \label{eq:both_increasing}
\end{eqnarray}
Since $\widetilde{\Omegab}$ and $\Omegab$ have the same
distribution, we have $\Omegab^\intercal\Vb \stackrel{d}{=}
\widetilde{\Omegab}^\intercal\Vb =\Omegab^\intercal\Vb_{1,2}$,
and hence $[\zb_1,\zb_2,\zb_3,\dots,\zb_n] \stackrel{d}{=}
[\xb_1,\xb_2,\xb_3,\dots,\xb_n]$. Then,
\begin{eqnarray*}
E\left(\sigma_1^2 \zb_1^\intercal \Big(\sum_{l=1}^m \sigma_l^2 \zb_l\zb_l^\intercal\Big)^{-1} \zb_1\right)
  &=&E\left(\sigma_1^2 \xb_1^\intercal \Big(\sum_{l=1}^m \sigma_l^2 \xb_l\xb_l^\intercal\Big)^{-1} \xb_1 \right)
      \nonumber \\
E\left(\sigma_2^2 \xb_2^\intercal \Big(\sum_{l=1}^m \sigma_l^2 \xb_l\xb_l^\intercal\Big)^{-1} \xb_2\right)
  &=&E\left(\sigma_2^2 \zb_2^\intercal \Big(\sum_{l=1}^m \sigma_l^2 \zb_l\zb_l^\intercal\Big)^{-1} \zb_2.\right).
\end{eqnarray*}
Therefore,  (\ref{eq:both_increasing}) becomes
\begin{equation}\label{eq:non_incrs}
E\left\{\sigma_1^2 \zb_1^\intercal \Big(\sum_{l=1}^m \sigma_l^2
\zb_l\zb_l^\intercal\Big)^{-1} \zb_1\right\}>
E\left\{\sigma_2^2 \zb_2^\intercal \Big(\sum_{l=1}^m
\sigma_l^2 \zb_l\zb_l^\intercal\Big)^{-1} \zb_2\right\}.
\end{equation}
\end{proof}

From Theorem~\ref{key}, we have
\[E\left\{\widehat \Ub_\ell \widehat\Sigmab_\ell\widehat\Vb_\ell^\top\right\}
= E(\Qb\Qb^\top\Ab)=\Ub \Lambdab\Ub^\top \Ub\Sigmab \Vb^\top= \Ub\Lambdab\Sigmab\Vb^\top.\]
The corollary below is an immediate result of Theorem~\ref{key}.
\begin{cor} By assuming all the conditions in Theorem~\ref{key}, we have the following consistency result:
\[E\left\{\widehat \Ub_\ell \widehat\Sigmab_\ell\widehat\Vb_\ell^\top\right\}
= \Ub \Db\Vb^\top,\]
where $\Db$ is an $m\times n$ diagonal matrix given by $\Db =\Lambdab\Sigmab$.
\end{cor}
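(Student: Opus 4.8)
The plan is to obtain the corollary directly from Theorem~\ref{key} after rewriting $\widehat{\Ub}_\ell\widehat{\Sigmab}_\ell\widehat{\Vb}_\ell^\intercal$ in terms of $\Qb\Qb^\intercal$. First I would record that Steps~4 and~5 of Algorithm~\ref{alg:rsvd} produce an \emph{exact} factorization, not merely an approximate one: the SVD $\widehat{\Wb}_\ell\widehat{\Sigmab}_\ell\widehat{\Vb}_\ell^\intercal$ computed in Step~4 is the genuine SVD of the $\ell\times n$ matrix $\Qb^\intercal\Ab$ (no truncation occurs there; truncation to rank $k$ happens only later, in Step~6), so $\widehat{\Wb}_\ell\widehat{\Sigmab}_\ell\widehat{\Vb}_\ell^\intercal=\Qb^\intercal\Ab$ holds sample-path-wise. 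Together with $\widehat{\Ub}_\ell=\Qb\widehat{\Wb}_\ell$ from Step~5 this gives
\[
\widehat{\Ub}_\ell\,\widehat{\Sigmab}_\ell\,\widehat{\Vb}_\ell^\intercal
=\Qb\bigl(\widehat{\Wb}_\ell\,\widehat{\Sigmab}_\ell\,\widehat{\Vb}_\ell^\intercal\bigr)
=\Qb\Qb^\intercal\Ab .
\]

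Next I would take expectations. Since $\Ab$ is deterministic and $E(\Qb\Qb^\intercal)$ exists by Theorem~\ref{key}, linearity yields $E\{\widehat{\Ub}_\ell\widehat{\Sigmab}_\ell\widehat{\Vb}_\ell^\intercal\}=E(\Qb\Qb^\intercal)\,\Ab=\Ub\Lambdab\Ub^\intercal\Ab$. Substituting the full SVD $\Ab=\Ub\Sigmab\Vb^\intercal$ and using $\Ub^\intercal\Ub=\Ibf_m$,
\[
E\{\widehat{\Ub}_\ell\widehat{\Sigmab}_\ell\widehat{\Vb}_\ell^\intercal\}
=\Ub\Lambdab\Ub^\intercal\Ub\Sigmab\Vb^\intercal
=\Ub(\Lambdab\Sigmab)\Vb^\intercal ,
\]
so the claimed identity holds with $\Db:=\Lambdab\Sigmab$. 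It then remains to identify the shape of $\Db$: as $\Lambdab$ is $m\times m$ diagonal and $\Sigmab$ is $m\times n$ diagonal, the $(i,j)$ entry of $\Db$ is $\lambda_i\sigma_i$ when $i=j\le m$ and $0$ otherwise, i.e.\ $\Db$ is an $m\times n$ diagonal matrix, as stated. One may add that part~(b) of Theorem~\ref{key} gives $0<\lambda_i\sigma_i<\sigma_i$, so in expectation the rSVD output is a coordinatewise shrinkage of the true truncated SVD.

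I do not expect a genuine obstacle; the statement follows immediately from Theorem~\ref{key}. The only two points that deserve a moment's care are (i) justifying that the Step~4 reconstruction is exact, so that $\widehat{\Ub}_\ell\widehat{\Sigmab}_\ell\widehat{\Vb}_\ell^\intercal=\Qb\Qb^\intercal\Ab$ is valid pathwise and expectations may be taken entrywise; and (ii) checking that the ``$q=0$ without loss of generality'' reduction used in the proof of Theorem~\ref{key} does not affect the present statement. For (ii): replacing $\Ab$ by $(\Ab\Ab^\intercal)^q\Ab=\Ub\,\diag(\sigma_i^{2q+1})\,\Vb^\intercal$ leaves $\Ub$ and $\Vb$ unchanged and only raises the singular values to the power $2q+1$, so the $\Lambdab$ supplied by Theorem~\ref{key} is built from $\{\sigma_i^{2q+1}\}$; but the matrix $\Ab$ multiplying $E(\Qb\Qb^\intercal)$ on the right is still $\Ub\Sigmab\Vb^\intercal$ with the original $\Sigmab$, and hence $\Db=\Lambdab\Sigmab$ is still $m\times n$ diagonal. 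Thus the conclusion is as stated.
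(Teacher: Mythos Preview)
Your proposal is correct and follows essentially the same route as the paper: the paper derives the corollary in one line by writing $E\{\widehat\Ub_\ell\widehat\Sigmab_\ell\widehat\Vb_\ell^\top\}=E(\Qb\Qb^\top\Ab)=\Ub\Lambdab\Ub^\top\Ub\Sigmab\Vb^\top=\Ub\Lambdab\Sigmab\Vb^\top$, which is exactly your argument with less detail. Your additional remarks on the exactness of the Step~4 SVD, the shape of $\Db$, and the effect of the $q$-power reduction are all correct and make explicit what the paper leaves implicit.
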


\section{A numerical example}
In this section, we demonstrate the theoretical result presented in the previous section by a simple yet illustrative example. When $\Omegab$ consists of i.i.d. Gaussian entries, the phenomenon of Theorem~\ref{key} can be observed.  However, when $\Omegab$ consists of i.i.d. entries from other mean-zero, but non-Gaussian, distributions, the phenomenon of Theorem~\ref{key} can not be observed numerically. Let
\[
\Ab=\left[
\begin{array}{rrr}
3&3&3\\
-2&-2 &4\\
1&-1&0
\end{array}
\right].
\]
With singular value decomposition, we have
\[
\Ab=\left[
\begin{array}{ccc}
1&0&0\\
0&1 &0\\
0&0&1
\end{array}\right]
\, \left[
\begin{array}{ccc}
3 \sqrt{3} &0&0\\
0&2 \sqrt{6} &0\\
0&0& \sqrt{2}
\end{array}\right]
\, \left[
\begin{array}{ccc}
1/ \sqrt{3} &1/\sqrt{3}& 1/\sqrt{3}\\
-1/\sqrt{6}& -1/\sqrt{6} & 2/\sqrt{6}\\
1/\sqrt{2}&-1/\sqrt{2}& 0
\end{array}
\right].
\]
Note that the left singular matrix is an identity. Therefore, by Theorem~\ref{key}, $E (\Qb\Qb^\intercal )$ is a diagonal matrix with diagonals in the interval $(0,1)$.
To numerically check if $E (\Qb\Qb^\intercal )$ is diagonal and has diagonal entries in $(0,1)$, we approximate the expectation $E (\Qb\Qb^\intercal )$ by the average from $N$ repeated samples of $\Omegab$, $\frac1N\sum_{i=1}^N \Qb_i\Qb_i^\top$, where $\Omegab_i$ has size $3\times 2$.

\subsection{Theorem~\ref{key} is valid}
When $\Omegab$ consists of i.i.d. standard Gaussian entries, the averages $\frac1N\sum_{i=1}^N \Qb_i\Qb_i^\top$
with $N = 10^3,~ 10^4,~ 10^5,~ 10^6$ and $10^8$ are listed below, respectively.
\begin{align*}
&\left[
\begin{array}{rrr}
0.8525  &  0.0086 &  -0.0061\\
0.0086  &  0.8235  &  0.0046\\
-0.0061  &  0.0046  &  0.3239
\end{array}
\right], \qquad
\left[
\begin{array}{rrr}
 0.8440  & -0.0023  & -0.0015 \\
 -0.0023  &  0.8358  & -0.0008 \\
 -0.0015 &  -0.0008 &   0.3202 \\
\end{array}
\right],\\[2ex]
&\left[
\begin{array}{rrr}
0.8454  & -0.0003   & 0.0005 \\
-0.0003  &  0.8329  &  0.0006 \\
0.0005  &  0.0006  &  0.3217 \\
\end{array}
\right], \qquad
~\left[
\begin{array}{rrr}
0.8456 & -0.0001 & 0.0001 \\
-0.0001 & 0.8326 & 0.0001 \\
0.0001 & 0.0001 & 0.3223
\end{array}
\right],~ {\rm and}\\[2ex]
&\left[
\begin{array}{rrr}
%0.845168  &  0.000013  & -0.000005 \\
%0.000013  &  0.832270  &  0.000032 \\
%-0.000005  &  0.000032  &  0.322562
%
0.8452  &  0.0000  & 0.0000 \\
0.0000  &  0.8323  &  0.0000 \\
0.0000  &  0.0000  &  0.3226
\end{array}
\right].
\end{align*}
They are getting closer to a diagonal matrix as $N$ increases.

\subsection{Theorem~\ref{key} is not valid}

In this subsection, we illustrate four cases that Theorem~\ref{key} is not valid numerically.

\begin{itemize}
\item {\it $\Omegab$ consists of i.i.d. entries from Uniform $(-1,1)$.}

Unlike the Gaussian case, the consistency result~(\ref{eq:EQQT}) in Theorem~\ref{key}  can not be observed numerically for $\Omegab$ being sampled from Uniform~$(-1,1)$.
The averages with $N = 10^6$ and $10^8$, respectively, are listed below.
\[ \left[
\begin{array}{rrr}
0.8377 & 0.0125 & 0.0000 \\
0.0125 & 0.8366 & -0.0005 \\
0.0000 & -0.0005 & 0.3287
\end{array}
\right]  ~~{\rm and}~~
\left[
\begin{array}{rrr}
0.8374 & 0.0127 & 0.0000 \\
0.0127 & 0.8337 & 0.0000 \\
0.0000 & 0.0000 & 0.3289
\end{array}
\right]
\]
The average fails to converge to a diagonal matrix  numerically for $N$ up to $10^8$.

\item {\it$\Omegab$ consists of i.i.d. entries from $t$-distribution with 3 degrees of freedom.}

Consider a  $t$-distribution with 3 degrees of freedom.
The averages with $N = 10^6$ and $10^8$, respectively, are listed below.
\[ \left[
\begin{array}{rrr}
0.8510 &  -0.0092  & 0.0000\\
   -0.0092 &   0.8313  &  0.0001\\
    0.0000  &  0.0001  &  0.3177
\end{array}
\right]  ~~{\rm and}~~
\left[
\begin{array}{rrr}
0.8512  & -0.0094  & 0.0000\\
-0.0094  &  0.8311 &   0.0000\\
 0.0000 &   0.0000  &  0.3177\end{array}
\right].
\]
The average fails to converge to a diagonal matrix  numerically for $N$ up to $10^8$.

\item {\it $\Omegab$ consists of i.i.d. entries from a shifted exponential.}

Consider a shifted exponential distribution  with probability density function $\exp\{-(x+1)\}$ on interval $(-1,\infty)$. This distribution is mean zero but asymmetric. The averages with $N = 10^6$ and $10^8$, respectively, are listed below.
\[ \left[
\begin{array}{rrr}
0.8698 & -0.0004 & -0.0003 \\
-0.0004 & 0.8304 & -0.0005 \\
-0.0003 & -0.0005 & 0.2998
\end{array}
\right]  ~~{\rm and}~~
 \left[
\begin{array}{rrr}
0.8696 & -0.0005 &  -0.0000 \\
   -0.0005 &   0.8305  & -0.0000 \\
   -0.0000  & -0.0000  &  0.2999
\end{array}
\right].
\]
The average fails to converge to a diagonal matrix  numerically for $N$ up to $10^8$.

\item {\it $\Omegab$ consists of i.i.d. entries from a discrete distribution.}

Consider a discrete distribution, which takes values $\{-1,1\}$ with equal probability.
The averages with $N = 10^6$ and $10^8$, respectively, are listed below.
\[ \left[
\begin{array}{rrr}
0.8803 & 0.0445 & 0.0001 \\
0.0445 & 0.8745 & -0.0003 \\
0.0001 & -0.0003 & 0.2452
\end{array}
\right] ~~{\rm and}~~
\left[
\begin{array}{rrr}
0.8797 & 0.0449 & 0.0000 \\
0.0449 & 0.8743 & 0.0000 \\
0.0000 & 0.0000 & 0.2459
\end{array}
\right].
\]
The average fails to converge to a diagonal matrix  numerically for $N$ up to $10^8$.
\end{itemize}

\section{Discussion and conclusion}

We have provided a statistical justification for the rSVD algorithm in Theorem~\ref{key} under the assumption $\Omegab$ consisting of i.i.d. Gaussian entries. Theorem~\ref{key} indicates that $\Qb$ contains consistent information of the left singular vectors $\Ub$ and its ordering through decreasing diagonals of $\Lambdab$. However, this theorem might not be valid for other types of distributions, such as uniform distribution, $t$-distribution, asymmetric shifted-to-zero-mean exponential distribution, and discrete uniform distribution over $\{\pm 1\}$. The Gaussian distribution is the only distribution that we know so far, such that Theorem~\ref{key} holds. Further study on the sampling distribution for $\Omegab$ is worth pursuing, which might play an important role in the quality of accuracy for randomized algorithms in general.

\section*{Acknowledgement}
This work is partially supported by the Ministry of Science and Technology and the National Center for Theoretical Sciences in Taiwan. 

%\section*{References}
\bibliographystyle{elsarticle-harv}
\bibliography{rsvd}

\end{document}